 \newtheorem{Theorem}{Theorem}[section]
 \newtheorem{Corollary}[Theorem]{Corollary}
 \newtheorem{Lemma}[Theorem]{Lemma}
 \newtheorem{Proposition}[Theorem]{Proposition}
 \newtheorem{Definition}[Theorem]{Definition}
 \newtheorem{Remark}[Theorem]{Remark}
 \numberwithin{equation}{section}
\begin{document}

\title[Siu's semicontinuity theorem]
 {Lelong numbers, complex singularity exponents, and Siu's semicontinuity theorem}

\author{Qi'an Guan}
\address{Qi'an Guan: School of Mathematical Sciences, and Beijing International Center for Mathematical Research,
Peking University, Beijing 100871, China.}
\email{guanqian@amss.ac.cn}
\author{Xiangyu Zhou}
\address{Xiangyu Zhou: Institute of Mathematics, AMSS, and Hua Loo-Keng Key Laboratory of Mathematics, Chinese Academy of Sciences, Beijing 100190, China}
\email{xyzhou@math.ac.cn}

\thanks{The authors were partially supported by NSFC-11431013. The second author would like to thank NTNU for offering him Onsager Professorship.
The first author was partially supported by NSFC-11522101.}

\subjclass{}

\keywords{$L^2$ extension theorem,
plurisubharmonic function, Lelong number, multiplier ideal sheaf}

\date{\today}

\dedicatory{}

\commby{}


\begin{abstract}
In this note, we present a relationship between Lelong numbers and complex singularity exponents.
As an application, we obtain a new proof of Siu's semicontinuity theorem for Lelong numbers.

$\\$
R\'{E}SUM\'{E}. Dans cette note, nous pr\'{e}sentons une relation entre le nombre de Lelong et exposants de singularit\'{e}s complexes.
Comme application, nous obtenons une nouvelle preuve de semicontinuit\'{e} th\'{e}or\`{e}me de Siu pour les num\'{e}ros Lelong.
\end{abstract}

\maketitle
\section{Introduction}
Let $\varphi$ be a plurisubharmonic function near the origin $o\in\mathbb{C}^{n}$.
The Lelong number is defined as follows
\begin{Definition}
\label{def:lelong}
$\nu(\varphi,o):=\sup\{c\geq0:\varphi\leq c\log|z|+O(1)\}.$
\end{Definition}

In \cite{siu74}, Siu established the semicontinuity theorem for Lelong numbers,
i.e.,
the upper level set of Lelong numbers is analytic.
After that,
Kiselman \cite{kisel86} generalized Siu's semicontinuity theorem for directed Lelong numbers.
In \cite{demailly87},
Demailly introduced a generalized Lelong number and generalized the above result of Kiselman.
And then,
Demailly (see \cite{demailly-book}) gave a completely new and simple proof of Siu's theorem
by using Ohsawa-Takegoshi $L^{2}$ extension theorem.

In this note,
we present a new proof of Siu's theorem by
establishing a relationship between Lelong numbers and complex singularity exponents.

Now let us recall the definition of the complex singularity exponent using the concept of multiplier ideal sheaf $\mathcal{I}(\varphi)$
(see \cite{tian87}, see also \cite{demailly-note2000,demailly-book}) (also called log canonical threshold in algebraic geometry see \cite{Sho92,Ko92}):
\begin{Definition}
\label{def:lct}
The complex singularity exponent at $o$ is defined to be
$$c_{o}(\varphi):=\sup\{c\geq0:\mathcal{I}(c\varphi)_{o}=\mathcal{O}_{o}\}.$$
\end{Definition}

Our main result is the following:

\begin{Theorem}
\label{thm:approx_lelong_lct}
Let $\varphi$ be a plurisubharmonic function on $D\subset\subset\mathbb{C}^{n}$ with coordinates $z=(z_{1},\cdots,z_{n})$ containing the origin.
Then for any $k\in\mathbb{N}$,
there exist plurisubharmonic functions $\varphi_{k}$ near the origin of $\mathbb{C}^{n}\times\mathbb{C}^{(2^{k}-1)n}$ with coordinates $(z,w)$
such that

$(1)$ $\varphi_{k}(z,o)=\varphi(z)$;

$(2)$ $\nu(\varphi_{k},(z,o))=\nu(\varphi,z)$;

$(3)$ $\frac{(2^{k}-1)n}{\nu(\varphi,z)}\leq c_{(z,o)}(\varphi_{k})\leq \frac{2^{k}n}{\nu(\varphi,z)}$

for any $z\in D$, where $o$ is the origin in $\mathbb{C}^{(2^{k}-1)n}$.
\end{Theorem}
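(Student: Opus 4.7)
My plan is to construct $\varphi_k$ by combining $\varphi(z)$ with an auxiliary plurisubharmonic function of the extra variables $w\in\mathbb{C}^{(2^k-1)n}$, and to verify the three properties using standard Lelong-number arithmetic together with a tensor-product inequality for complex singularity exponents. A model construction, tailored to a single base point $z_0 \in D$ at which we set $\nu := \nu(\varphi, z_0)$, is (after translating so that $z_0 = 0$)
\[
\varphi_k(z, w) \;:=\; \max\bigl\{\varphi(z),\; \nu \log|w|\bigr\},
\]
with $|w|$ the Euclidean norm on $\mathbb{C}^{(2^k-1)n}$. Property~(1) is automatic since $\log 0 = -\infty$, and property~(2) follows because the Lelong number of a maximum of two psh functions is the minimum of the individual Lelong numbers, both of which equal $\nu$ at $(0,0)$.

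For the upper bound in property~(3) I would appeal to the general fact that on $\mathbb{C}^N$, any psh function $\psi$ with Lelong number $\mu > 0$ at $p$ satisfies $c_p(\psi) \leq N/\mu$ (comparing $e^{-2c\psi}$ with $|z-p|^{-2c\mu}$); with $N = 2^k n$ this yields $c_{(z,o)}(\varphi_k) \leq 2^k n/\nu(\varphi, z)$. The lower bound would follow from the tensor-product inequality
\[
c_{(p_1, p_2)}\bigl(\max\{\Phi(x), \Psi(y)\}\bigr) \;\geq\; c_{p_1}(\Phi) + c_{p_2}(\Psi)
\]
for psh $\Phi, \Psi$ in disjoint variables, itself a consequence of Young's inequality $\min(a,b) \leq a^{\alpha}b^{1-\alpha}$: the bound makes $\int e^{-2c\max\{\Phi,\Psi\}}\,dx\,dy$ factor as a product of two integrals, and optimizing over $\alpha \in (0,1)$ produces the sum. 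Applied with $\Phi = \varphi$ and $\Psi = \nu\log|w|$ (for which $c_0(\Psi) = (2^k-1)n/\nu$), this gives $c_{(0,o)}(\varphi_k) \geq c_0(\varphi) + (2^k-1)n/\nu \geq (2^k-1)n/\nu$, where the last step uses only $c_0(\varphi) \geq 0$.

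The hard part is that the statement requires a single $\varphi_k$ satisfying (1)--(3) at every $z \in D$ simultaneously, whereas the model construction above is tied to the Lelong number at a fixed base point; replacing $\nu\log|w|$ by a psh function of $(z,w)$ that behaves correctly at every $(z,o)$ is what makes the construction delicate. I expect this to be accomplished by an iterative doubling construction of the shape $\varphi_j(Z, W) := \max\{\varphi_{j-1}(Z), \varphi_{j-1}(Z+W)\}$, whose ambient dimension doubles at each step and produces the factor $2^k$, with the Ohsawa--Takegoshi $L^2$ extension theorem (alluded to in the introduction) used to upgrade the tensor-product bound at each step and secure the additive improvement of $n/\nu$ in the complex singularity exponent needed to reach the sharp lower bound $(2^k-1)n/\nu$.
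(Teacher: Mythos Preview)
Your model construction at a fixed base point is fine, and you correctly isolate the real difficulty: producing a single $\varphi_k$ that works at every $z$. But the iterative doubling you propose, $\varphi_j(Z,W)=\max\{\varphi_{j-1}(Z),\varphi_{j-1}(Z+W)\}$, does \emph{not} deliver the lower bound in (3). After the linear change $(U,V)=(Z,Z+W)$ it becomes $\max\{\varphi_{j-1}(U),\varphi_{j-1}(V)\}$ in separated variables, so the very product formula you invoke gives $c_{(z_0,0)}(\varphi_k)=2^{k}\, c_{z_0}(\varphi)$. Take $\varphi(z)=\log|z_1|$ on $\mathbb{C}^n$: then $\nu(\varphi,0)=1$ and $c_0(\varphi)=1$, so your $\varphi_k$ has exponent $2^k$, whereas (3) demands at least $(2^k-1)n$; this fails as soon as $n\ge 3$, $k\ge 1$ (or $n\ge 2$, $k\ge 2$). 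The construction never feels the transverse coordinates $z_2,\dots,z_n$, and no appeal to Ohsawa--Takegoshi can manufacture the missing ``$n/\nu$ per step'' you hope for.

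The paper's construction is different in both ingredients. First, there is no $\max$: one pulls back by the subtraction maps $p_m(z,w)=z-w$ iteratively, obtaining $\Phi:=p_{2^{k-1}n}^{*}\circ\cdots\circ p_{n}^{*}(\varphi)$, which is just $\varphi$ composed with a surjective linear map $\mathbb{C}^{2^kn}\to\mathbb{C}^n$. Second---and this is the idea your proposal lacks---one symmetrises in the extra variables:
\[
\varphi_k(z,w):=\sup_{g\in U((2^k-1)n)}\Phi(z,gw).
\]
The point is that the slice $\varphi_k|_{\{z=z_0\}}$ is then $U((2^k-1)n)$-invariant in $w$, and for radial psh functions the identity $c_{o'}=(\dim)/\nu$ holds \emph{exactly}; moreover the slice Lelong number is shown to equal $\nu(\varphi,z_0)$. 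A \emph{single} application of the Demailly--Koll\'ar restriction inequality $c_{(z_0,o)}(\varphi_k)\ge c_{o'}(\varphi_k|_{\{z=z_0\}})$---this is the only place Ohsawa--Takegoshi enters, not once per step---then yields $c_{(z_0,o)}(\varphi_k)\ge (2^k-1)n/\nu(\varphi,z_0)$. The upper bound is Skoda together with (2), as you said.
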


The second inequality in $(3)$ can be directly deduced by Skoda's result in \cite{skoda72} combined with $(2)$.

\begin{Remark}
It is clear that $(3)$ in Theorem \ref{thm:approx_lelong_lct} is equivalent to
$$\frac{(2^{k}-1)n}{2^{k}n}\nu^{-1}(\varphi,z)\leq c_{(z,o)}((2^{k}n)\varphi_{k})\leq \nu^{-1}(\varphi,z),$$
which implies
$$\{\nu(\varphi,z)\geq \frac{(2^{k}-1)n}{2^{k}n}c\}\supseteq\{c_{(z,o)}((2^{k}n)\varphi_{k})\leq \frac{1}{c}\}\supseteq\{\nu(\varphi,z)\geq c\}.$$

Note that $\lim_{k\to+\infty}\frac{(2^{k}-1)n}{2^{k}n}=1$,
then we obtain
$$\{z|\nu(\varphi,z)\geq c\}=\cap_{k}(\{z|c_{(z,o)}((2^{k}n)\varphi_{k})\leq \frac{1}{c}\}).$$
\end{Remark}

By Berndtsson's solution of openness conjecture \cite{berndtsson13} posed by Demailly and Kollar \cite{D-K01}
(for a proof in two dimensional case see \cite{FM05j,FM05v,FM-book04}),
it follows that
$(\{z|c_{(z,o)}((2^{k}n)\varphi_{k})\leq \frac{1}{c}\})$ is analytic for any $k\in\mathbb{N}$ and $c>0$,
which deduces the following Siu's semicontinuity theorem for Lelong numbers \cite{siu74}
(For Demailly's new proof, the reader is refereed to \cite{demailly-book} and \cite{demailly2010})

\begin{Corollary}
\label{coro:siu}\cite{siu74}
$\{z|\nu(\varphi,z)\geq c\}$ is an analytic set.
\end{Corollary}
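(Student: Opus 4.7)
The plan is to read the corollary off Theorem~\ref{thm:approx_lelong_lct} via the identity
$$\{z\in D\mid\nu(\varphi,z)\geq c\}=\bigcap_{k\in\mathbb{N}}\bigl\{z\in D\,\bigm|\,c_{(z,o)}((2^{k}n)\varphi_{k})\leq 1/c\bigr\}$$
already established in the Remark following the theorem. It therefore suffices to verify (i) each set on the right is analytic, and (ii) countable intersections of analytic subsets of $D$ are themselves analytic.

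For (i), set $\psi_{k}:=(2^{k}n)\varphi_{k}$. Berndtsson's solution of the Demailly--Koll\'ar openness conjecture says that the supremum in Definition~\ref{def:lct} is not attained when finite: if $\mathcal{I}(t\psi_{k})_{(z,o)}=\mathcal{O}_{(z,o)}$ then $\mathcal{I}((t+\varepsilon)\psi_{k})_{(z,o)}=\mathcal{O}_{(z,o)}$ for some $\varepsilon>0$, which forces $c_{(z,o)}(\psi_{k})>t$. Applied with $t=1/c$, this gives the equivalence
$$c_{(z,o)}(\psi_{k})\leq 1/c\iff\mathcal{I}((1/c)\psi_{k})_{(z,o)}\neq\mathcal{O}_{(z,o)}.$$
By Nadel's coherence theorem, $\mathcal{I}((1/c)\psi_{k})$ is a coherent analytic ideal sheaf on a neighborhood of $D\times\{o\}\subset\mathbb{C}^{n}\times\mathbb{C}^{(2^{k}-1)n}$, so its zero locus is analytic; intersecting with the slice $\{w=o\}$ yields an analytic subset $A_{k}\subset D$.

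For (ii), I pass to the decreasing chain $B_{N}:=A_{1}\cap\cdots\cap A_{N}$ of analytic subsets of $D$. At every $z_{0}\in D$ the corresponding chain of radical ideal germs in $\mathcal{O}_{D,z_{0}}$ is increasing, hence stabilizes at some $N(z_{0})$ by Noetherianity of the local ring; a standard coherence argument then identifies $\bigcap_{k}A_{k}$ with $B_{N(z_{0})}$ in a sufficiently small neighborhood of $z_{0}$. Thus $\bigcap_{k}A_{k}$ is locally, and hence globally, analytic, which completes the proof. The only deep input above is Berndtsson's openness theorem; the genuine obstacle of the corollary, namely the construction of the auxiliary plurisubharmonic functions $\varphi_{k}$ converting the Lelong-number condition into a singularity-exponent condition, has already been absorbed into Theorem~\ref{thm:approx_lelong_lct} and is therefore available to us here.
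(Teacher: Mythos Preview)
Your argument is correct and follows essentially the same route as the paper: the paper simply states that Berndtsson's openness theorem makes each set $\{z\mid c_{(z,o)}((2^{k}n)\varphi_{k})\le 1/c\}$ analytic and then invokes the intersection identity from the Remark, leaving implicit both the passage through Nadel coherence and the fact that a countable (decreasing) intersection of analytic subsets is locally stationary. You have spelled out exactly those two steps, so your proof is a fleshed-out version of the paper's own deduction rather than a different approach.
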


We would like to thank the referee for pointing out that Berndtsson's solution of openness conjecture used above to deduce Corollary \ref{coro:siu}
might be replaced by the H\"{o}mander-Bombieri theorem as in Kiselman's proof of Siu's theorem.

\section{Preparation}

\subsection{Restriction formula about complex singularity exponent and Lelong number}

Let $\varphi$ be a plurisubharmonic function on a neighborhood of the origin $o\in\mathbb{C}^{n}$.
In \cite{D-K01}, the following restriction formula ("important monotonicity result") about complex singularity exponents is obtained
by using Ohsawa-Takegoshi $L^2$ extension theorem.

\begin{Proposition}
\label{prop:DK2000}\cite{D-K01}
For any regular complex submanifold $(H,o)\subset(\mathbb{C}^{n},o)$,
\begin{equation}
\label{equ:monotone_lct}
c_{o'}(\varphi|_{H})\leq c_{o}(\varphi)
\end{equation}
holds,
where $\varphi|_{H}\not\equiv-\infty$,
and $o'$ emphasizes that $c_{o'}(u|_{H})$ is computed on the submanifold $H$.
\end{Proposition}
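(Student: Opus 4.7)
The plan is to follow the standard argument of Demailly--Kollár via the Ohsawa--Takegoshi $L^2$ extension theorem. The key reformulation is that $c_{o'}(\varphi|_H)$ equals the supremum of those $c\geq 0$ for which $e^{-2c\varphi|_H}$ is locally integrable near $o'$ on $H$, and analogously on $\mathbb{C}^n$ for $c_o(\varphi)$. So it suffices to show that for every $c'<c_{o'}(\varphi|_H)$ one has $c_o(\varphi)\geq c'$.

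Fix such a $c'$. The hypothesis $\varphi|_H\not\equiv-\infty$ together with $c'<c_{o'}(\varphi|_H)$ gives a neighborhood $V$ of $o'$ in $H$ with $\int_V e^{-2c'\varphi|_H}\,dV_H<+\infty$. I would then invoke the Ohsawa--Takegoshi extension theorem to extend the constant function $1$ on $V\subset H$ to a holomorphic function $F$ on some neighborhood $\Omega$ of $o$ in $\mathbb{C}^n$, with $F|_{H\cap\Omega}\equiv 1$ and
\[
\int_{\Omega}|F|^{2}e^{-2c'\varphi}\,dV \;\leq\; C\int_{V}e^{-2c'\varphi|_H}\,dV_H\;<\;+\infty.
\]
When $H$ has codimension $\geq 2$ one iterates this along a flag of regular submanifolds $H=H_r\subset\cdots\subset H_0=(\mathbb{C}^{n},o)$ with consecutive pairs of codimension one, verifying at each stage that the relevant intermediate restriction of $\varphi$ is not identically $-\infty$ so that Ohsawa--Takegoshi applies and composing the resulting extensions.

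To conclude, since $F(o)=1$ and $F$ is continuous, $|F|\geq 1/2$ on some small neighborhood $U\subset\Omega$ of $o$, so
\[
\int_{U}e^{-2c'\varphi}\,dV \;\leq\; 4\int_{U}|F|^{2}e^{-2c'\varphi}\,dV \;<\;+\infty,
\]
which means $\mathcal{I}(c'\varphi)_{o}=\mathcal{O}_{o}$, i.e.\ $c_o(\varphi)\geq c'$. Letting $c'\nearrow c_{o'}(\varphi|_H)$ yields the desired monotonicity.

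The main point I expect to be delicate is the iterative codimension reduction: at each step in the flag one must ensure that the restricted plurisubharmonic weight stays not identically $-\infty$ (so as to apply Ohsawa--Takegoshi to that hypersurface inclusion) and that the $L^{2}$ estimate constants compose over finitely many steps. This issue can be bypassed by citing a version of Ohsawa--Takegoshi valid for regular submanifolds of arbitrary codimension, but the one-codimension-at-a-time reduction is more elementary and matches the original Demailly--Kollár treatment.
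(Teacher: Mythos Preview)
The paper does not supply its own proof of this proposition: it is quoted from Demailly--Koll\'ar \cite{D-K01} with only the remark that it ``is obtained by using Ohsawa--Takegoshi $L^{2}$ extension theorem.'' Your proposal is precisely that standard Demailly--Koll\'ar argument, so there is nothing to compare it against beyond noting that you have reconstructed the intended proof.

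One small comment on the flag iteration you flag as delicate: the condition $\varphi|_{H_i}\not\equiv-\infty$ is automatic for every intermediate $H_i\supset H$, since $\varphi|_H\not\equiv-\infty$ already forces $\varphi$ to be finite somewhere on $H\subset H_i$. So the only genuine content in the iteration is the composition of the finitely many Ohsawa--Takegoshi constants, which is harmless.
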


We recall the following restriction property of Lelong numbers.

\begin{Lemma}
\label{lem:lelong}(see \cite{demailly-book})
For any regular complex submanifold $(H,o)\subset(\mathbb{C}^{n},o)$,
\begin{equation}
\label{equ:monotone_lelong}
\nu(\varphi|_{H},o')\geq \nu(\varphi,o)
\end{equation}
holds,
where $\varphi|_{H}\not\equiv-\infty$,
and $o'$ emphasizes that $\nu(\varphi|_{H},o')$ is computed on the submanifold $H$.
\end{Lemma}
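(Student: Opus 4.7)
The plan is to reduce to the case where $H$ is a coordinate plane and then read off the inequality directly from the definition of the Lelong number. Recall that the Lelong number $\nu(\varphi,o)$ is independent of the choice of local holomorphic coordinates centered at $o$: any biholomorphism $F$ fixing the origin satisfies $C_1|z|\leq|F(z)|\leq C_2|z|$ near $o$, so $\log|F(z)|=\log|z|+O(1)$, and the defining inequality $\varphi\leq c\log|z|+O(1)$ is preserved.

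Because $H$ is a regular submanifold of dimension $k$ through $o$, I would first choose holomorphic coordinates $z=(z_1,\ldots,z_n)$ on a neighborhood of $o$ in which $H=\{z_{k+1}=\cdots=z_n=0\}$. Write $z'=(z_1,\ldots,z_k)$ for the induced coordinates on $H$ centered at $o'$. For points of $H$ we then have the exact identity $|(z',0)|=|z'|$, which is what makes the restriction step painless.

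Next, fix any $c$ with $0\leq c<\nu(\varphi,o)$. By Definition \ref{def:lelong}, there exist a neighborhood $U$ of $o$ and a constant $C$ such that
\[
\varphi(z)\leq c\log|z|+C\qquad\text{for all }z\in U.
\]
Restricting to $H\cap U$ and using $|(z',0)|=|z'|$, this becomes
\[
\varphi|_H(z')=\varphi(z',0)\leq c\log|z'|+C
\]
on a neighborhood of $o'$ in $H$. Applying Definition \ref{def:lelong} on $H$, we get $\nu(\varphi|_H,o')\geq c$. Taking the supremum over $c<\nu(\varphi,o)$ yields the desired inequality.

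There is essentially no obstacle here; the only slightly subtle point is the coordinate invariance of the Lelong number, which allows the reduction to the linear embedding. The hypothesis $\varphi|_H\not\equiv-\infty$ is used implicitly so that $\nu(\varphi|_H,o')$ is defined as a finite (or at worst $+\infty$) quantity in the usual sense; the argument above makes no further use of it.
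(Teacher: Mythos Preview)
Your argument is correct. The paper itself does not supply a proof of this lemma; it merely records the statement with a reference to Demailly's book, so there is no ``paper's own proof'' to compare against. What you have written is precisely the standard direct argument from the definition: reduce to a coordinate plane via the biholomorphic invariance of the Lelong number, and then observe that the inequality $\varphi(z)\leq c\log|z|+O(1)$ restricts verbatim because $|(z',0)|=|z'|$. Nothing is missing.
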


\subsection{Lelong number and complex singularity exponent for $U(n)$ invariant plurisubharmonic functions on $\mathbb{C}^{n}$}

We recall the following characterization of
 $U(n)$ invariant plurisubharmonic function (see Lemma III.7.10) in \cite{demailly-book}).

\begin{Lemma}
\label{lem:chara_invar}(see \cite{demailly-book})
Let $\varphi$ be a plurisubharmonic function on $\mathbb{B}^{n}$
which is $U(n)$ invariant.
Then
$\varphi=\chi(\log|z|)$,
where $\chi$ is a convex increasing function.
\end{Lemma}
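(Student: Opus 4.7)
The plan is to reduce this multi-variable statement to a classical one-variable fact about radial subharmonic functions via restriction to a complex line through the origin.

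First, I would use the $U(n)$-invariance to observe that $\varphi(z)$ depends only on $|z|$: for any $z, z' \in \mathbb{B}^n$ with $|z| = |z'|$, there is an $A \in U(n)$ with $Az = z'$, hence $\varphi(z) = \varphi(Az) = \varphi(z')$. Consequently there is a function $f : [0,1) \to [-\infty, +\infty)$ with $\varphi(z) = f(|z|)$, and after the substitution $t = \log|z|$ we may set $\chi(t) := f(e^t)$ for $t \in (-\infty, 0)$, so that $\varphi(z) = \chi(\log|z|)$. The remaining task is to prove that $\chi$ is convex and increasing.

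Next, I would restrict to a complex line. Fix a unit vector $v \in \mathbb{C}^n$ and consider the pull-back $u(\lambda) := \varphi(\lambda v) = \chi(\log|\lambda|)$ for $\lambda$ in the unit disc of $\mathbb{C}$. Since $\varphi$ is plurisubharmonic, $u$ is subharmonic on $\{|\lambda| < 1\}$, and by construction $u$ is rotation invariant in $\lambda$. At this point I would invoke (or, for completeness, reprove) the classical one-variable fact: a radial subharmonic function on the disc, viewed as a function of $\log|\lambda|$, is convex and nondecreasing. Convexity follows because in polar coordinates the Laplacian of a radial function $u(\lambda) = \chi(\log r)$ equals $r^{-2}\chi''(\log r)$ in the sense of distributions; subharmonicity forces $\chi'' \geq 0$. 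Monotonicity follows from the sub-mean-value inequality applied on discs centered at the origin: the spherical means of $u$, which equal $\chi(\log r)$ themselves by radial symmetry, are nondecreasing in $r$.

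Finally, the conclusion is independent of the choice of unit vector $v$, since all such restrictions give the same $\chi$. Hence $\chi$ is convex and increasing on $(-\infty, 0)$, completing the proof.

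The main (minor) obstacle is the need to handle the possibility that $\chi$ takes the value $-\infty$ near the origin, i.e., that $\varphi(o) = -\infty$. This is not a real difficulty: the distributional argument for convexity still applies, and one may also first prove the result on $\{z : 0 < |z| < 1\}$, where $\chi$ is finite by the usual $L^1_{\mathrm{loc}}$ property of plurisubharmonic functions, and then extend by monotone upper-semicontinuous limits toward $t \to -\infty$.
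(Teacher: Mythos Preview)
The paper does not prove this lemma; it simply records it as Lemma~III.7.10 of Demailly's \emph{Complex Analytic and Differential Geometry} and moves on. Your argument is correct and is exactly the standard one found there: use $U(n)$-invariance to see that $\varphi$ is radial, restrict to a complex line to obtain a rotation-invariant subharmonic function on the unit disc, and then invoke the classical one-variable fact that such a function is a convex increasing function of $\log|\lambda|$.
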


The folllowing remark is a direct consequence of Lemma \ref{lem:chara_invar}

\begin{Remark}
\label{rem:lct_lelong_u(n)}(see \cite{demailly-book})
Let $\varphi$ be a plurisubharmonic function on $\mathbb{B}^{n}$
which is $U(n)$ invariant.
Then
$c_{o}(\varphi)=\frac{n}{\nu(\varphi,o)}$.
\end{Remark}

\begin{proof}
By Definition \ref{def:lelong},
it is clear that
$$\nu(\varphi,o)=\lim_{t\to-\infty}\frac{\chi(t)}{t},$$
i.e.
for any $\varepsilon>0$,
there exists $\delta>0$
such that for any $|z|<\delta$,
$$(\nu(\varphi,o)+\varepsilon)\log|z|\leq\varphi(z)\leq(\nu(\varphi,o)-\varepsilon)\log|z|,$$
which deduces the present remark by directly calculating.
\end{proof}

\subsection{A holomorphic map and Lelong number}

Define a holomorphic map $p_{m}$ from $\mathbb{C}^{m}\times\mathbb{C}^{m}$ to $\mathbb{C}^{m}$
with coordinates $(z_{1},\cdots,z_{m},w_{1},\cdots,w_{m})$ and $(\tilde{z}_{1},\cdots,\tilde{z}_{n})$
such that
$$p_{m}(z_{1},\cdots,z_{m},w_{1},\cdots,w_{m})=(z_{1}-w_{1},\cdots,z_{m}-w_{m}).$$

Let $\varphi$ be a plurisubharmonic function on $D\subset\mathbb{C}^{m}$,
then $p_{m}^{*}(\varphi)$ is well-defined on $p_{m}^{-1}(D)\subset\mathbb{C}^{m}\times\mathbb{C}^{m}$.

\begin{Lemma}
\label{lem:property}
The construction of $p_{m}^{*}(\varphi)$ implies that for any $z_{0}\in D$ the following statements hold:

$(1)$ $p_{m}^{*}(\varphi)(z_{0},o)=\varphi(z_{0})$;

$(2)$ $\nu(p_{m}^{*}(\varphi),(z_{0},o))=\nu(\varphi,z_{0})=\nu(p_{m}^{*}(\varphi)|_{z=z_{0}},(z_{0},o)')$,
where $(z_{0},o)'$ emphasizes that $\nu(p_{m}^{*}(\varphi)|_{z=z_{0}},(z_{0},o)')$ is computed on the submanifold $\{z=z_{0}\}$.
\end{Lemma}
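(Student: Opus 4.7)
Statement (1) is immediate, since $p_m(z_0,o)=z_0$ gives $p_m^*(\varphi)(z_0,o)=\varphi(z_0)$. For (2), I would establish the three equalities via a short chain of comparisons.

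For the rightmost equality $\nu(\varphi,z_0)=\nu(p_m^*(\varphi)|_{z=z_0},(z_0,o)')$, observe that the restriction $p_m^*(\varphi)|_{z=z_0}$ is the function $w\mapsto\varphi(z_0-w)$; the affine biholomorphism $w\mapsto z_0-w$ sends $o$ to $z_0$, so the substitution $u=-w$ in Definition \ref{def:lelong} (noting $|-w|=|w|$) yields the equality. For the inequality $\nu(\varphi,z_0)\geq\nu(p_m^*(\varphi),(z_0,o))$, I would apply Lemma \ref{lem:lelong} to the submanifold $H=\{w=0\}$ through $(z_0,o)$, on which $p_m^*(\varphi)|_H(z)=\varphi(z)$.

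The main step is the reverse inequality $\nu(p_m^*(\varphi),(z_0,o))\geq\nu(\varphi,z_0)$, which I would prove directly from Definition \ref{def:lelong}. For any $0\leq c<\nu(\varphi,z_0)$, there is a constant $C$ with $\varphi(z_0+u)\leq c\log|u|+C$ for $u$ near $0$. Writing points near $(z_0,o)$ as $(z_0+z',w')$ and substituting $u=z'-w'$, one gets
\[
p_m^*(\varphi)(z_0+z',w')\leq c\log|z'-w'|+C.
\]
The elementary norm bound $|z'-w'|\leq\sqrt{2}\,\|(z',w')\|$ together with $c\geq 0$ yields $c\log|z'-w'|\leq c\log\|(z',w')\|+\tfrac{c}{2}\log 2$, so $p_m^*(\varphi)\leq c\log\|(z',w')\|+O(1)$ near $(z_0,o)$. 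Hence $c\leq\nu(p_m^*(\varphi),(z_0,o))$, and letting $c\nearrow\nu(\varphi,z_0)$ closes the chain. I do not foresee any real obstacle; the only mildly delicate point is that $\log|z'-w'|=-\infty$ on the diagonal $z'=w'$, but since Lelong bounds are one-sided upper bounds on $\varphi$, a $-\infty$ value is trivially compatible with the required inequality and plays no role.
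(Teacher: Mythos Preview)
Your proof is correct and follows essentially the same approach as the paper: statement~(1) and the second equality in~(2) are read off directly from the definition of $p_m$, the inequality $\nu(p_m^*(\varphi),(z_0,o))\le\nu(\varphi,z_0)$ is obtained from Lemma~\ref{lem:lelong} applied to $H=\{w=0\}$, and the reverse inequality comes from the pointwise bound $|z'-w'|\le C\,\|(z',w')\|$ pushed through Definition~\ref{def:lelong}. The only cosmetic differences are that the paper first translates to $z_0=0$ and writes the norm comparison as $\log|z-w|\le\log(|z|+|w|)$ rather than your $|z'-w'|\le\sqrt{2}\,\|(z',w')\|$, which are equivalent up to an $O(1)$ constant.
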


\begin{proof}
By definition of $p_{m}$,
it follows that $(1)$ and
the second equality of $(2)$ hold.

By $(1)$,
it follows that
$\nu(p_{m}^{*}(\varphi),(z_{0},o))\leq\nu(\varphi,z_{0})$ by Lemma \ref{lem:lelong}.

It suffices to consider the case $z_{0}=(0,\cdots,0)\in D$.
It is clear that
$p_{m}^{*}\log|\tilde{z}|=\log|z-w|\leq\log(|z|+|w|)$.
Then, for any $c>0$ satisfying
$\varphi\leq c\log|\tilde{z}|+O(1)$ $(z\to0)$,
we have
$p_{m}^{*}\varphi\leq c p_{m}^{*}\log|\tilde{z}|+O(1)\leq c\log(|z|+|w|)+O(1),$
which implies
$\nu(p_{m}^{*}(\varphi),(z_{0},o))\geq\nu(\varphi,z_{0})$.

The lemma is proved.
\end{proof}

\subsection{An invariant property of Lelong numbers}

Let $\varphi$ be a plurisuharmonic function on $\Omega\subset\mathbb{C}^{n}\times\mathbb{C}^{m}$ containing the origin,
and let $(z,w)$ denote the coordinates.
One can define
$$\tilde{\varphi}(z,w):=\sup_{g\in U(m)}\varphi(z,gw)$$
on a $U(m)$ invariant neighborhood of $\Omega\cap\{w=o\}$.
It is a plurisubharmonic function.

By definition \ref{def:lelong}, it follows that
\begin{Lemma}
\label{lem:invar}One has
$\nu(\tilde{\varphi},(z_{0},o))=\nu(\varphi,(z_{0},o))$
and
$\nu(\tilde{\varphi}|_{z=z_{0}},(z_{0},o)')=\nu(\varphi|_{z=z_{0}},$ $(z_{0},o)')$
hold for any $(z_{0},o)\in(\Omega\cap\{w=o\})$.
\end{Lemma}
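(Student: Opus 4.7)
\medskip

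The plan is to prove both equalities by a direct comparison using only the definition of Lelong number and the fact that $U(m)$ acts on $\mathbb{C}^m$ by Euclidean isometries. The argument splits into an easy inequality and its (almost equally easy) reverse.

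For the first assertion $\nu(\tilde{\varphi},(z_{0},o))=\nu(\varphi,(z_{0},o))$, I would first note that taking $g=\mathrm{id}$ in the definition of $\tilde{\varphi}$ yields the pointwise bound $\tilde{\varphi}\geq \varphi$. Consequently every upper bound $\tilde{\varphi}(z,w)\leq c\log|(z-z_{0},w)|+O(1)$ valid near $(z_{0},o)$ automatically gives the same bound for $\varphi$, hence $\nu(\tilde{\varphi},(z_{0},o))\leq \nu(\varphi,(z_{0},o))$ by Definition \ref{def:lelong}.

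For the reverse direction I would use that any $g\in U(m)$ preserves $|w|$, so $|(z-z_{0},gw)|=|(z-z_{0},w)|$ and any ball $B_{r}(z_{0},o)\subset\mathbb{C}^{n}\times\mathbb{C}^{m}$ is $U(m)$-invariant in the $w$-factor. If $c<\nu(\varphi,(z_{0},o))$, choose $r$ and $M$ such that $\varphi(z,w)\leq c\log|(z-z_{0},w)|+M$ on $B_{r}(z_{0},o)$. For $(z,w)\in B_{r}(z_{0},o)$ and any $g\in U(m)$ the point $(z,gw)$ lies in the same ball, so
\[
\varphi(z,gw)\leq c\log|(z-z_{0},gw)|+M=c\log|(z-z_{0},w)|+M,
\]
and taking the supremum over $g$ yields $\tilde{\varphi}(z,w)\leq c\log|(z-z_{0},w)|+M$. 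Thus $\nu(\tilde{\varphi},(z_{0},o))\geq c$, and letting $c\to \nu(\varphi,(z_{0},o))$ gives the opposite inequality.

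For the second assertion I would apply literally the same two-sided argument to the restricted functions $\varphi|_{z=z_{0}}$ and $\tilde{\varphi}|_{z=z_{0}}=\sup_{g\in U(m)}\varphi(z_{0},g\,\cdot)$ on the submanifold $\{z=z_{0}\}\cong\mathbb{C}^{m}$; the isometry property of $g\in U(m)$ on $\mathbb{C}^{m}$ is exactly what is needed. There is no real obstacle here: the only point worth being careful about is that the additive constant $M$ in the logarithmic bound can be taken uniformly in $g$, and this is automatic because $U(m)$ acts by isometries and a single invariant ball suffices for all group elements. Plurisubharmonicity of $\tilde{\varphi}$ is already asserted in the text, so it need not be reproved.
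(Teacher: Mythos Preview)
Your argument is correct and matches the paper's approach. In the paper the lemma is simply preceded by the phrase ``By Definition~\ref{def:lelong}, it follows that'' with no further proof, and your two-sided comparison exploiting that each $g\in U(m)$ is an isometry of the $w$-coordinate is precisely the natural way to unpack that remark.
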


\section{proof of Theorem \ref{thm:approx_lelong_lct}}

Let $m\sim 2^{j}n$ $(j=0,1,\cdots,k-1)$ as in Lemma \ref{lem:property},
where $\sim$ means that the former is replaced by the latter.
We consider the plurisubharmonic function
$p_{2^{k-1}n}^{*}\circ\cdots\circ p_{n}^{*}(\varphi)$ on domain in $\mathbb{C}^{2^{k}n}$.

By Lemma \ref{lem:property},
it follows that

$(1)$ $p_{2^{k-1}n}^{*}\circ\cdots\circ p_{n}^{*}(\varphi)(z,o)=\varphi(z)$;

$(2)$ $\nu(p_{2^{k-1}n}^{*}\circ\cdots\circ p_{n}^{*}(\varphi),(z_{0},o))=\nu(\varphi,z_{0})=\nu(p_{2^{k-1}n}^{*}\circ\cdots\circ p_{n}^{*}(\varphi)|_{\{z=z_{0}\}},(z_{0},o)')$
for any $z_{0}\in D$ and origin $o\in\mathbb{C}^{(2^{k}-1)n}$.

Let
$\varphi_{k}(z,w):=\sup_{g\in U((2^{k}-1)n)}(p_{2^{k-1}n}^{*}\circ\cdots\circ p_{n}^{*}(\varphi)(z,gw))$.

By Lemma \ref{lem:invar} (the first and third equalities) and $(2)$ (the third and fourth equalities),
it follows that
\begin{equation}
\label{equ:0507a}
\begin{split}
\nu({\varphi_{k}}|_{\{z=z_{0}\}},(z_{0},o)')
&=\nu(p_{2^{k-1}n}^{*}\circ\cdots\circ p_{n}^{*}(\varphi)|_{\{z=z_{0}\}},(z_{0},o)')
\\&=\nu(p_{2^{k-1}n}^{*}\circ\cdots\circ p_{n}^{*}(\varphi),(z_{0},o))
\\&=\nu(\varphi_{k},(z_{0},o))
=\nu(\varphi,z_{0}).
\end{split}
\end{equation}

By Remark \ref{rem:lct_lelong_u(n)} ($\mathbb{C}^{n}\sim\{z=z_{0}\}\cap\mathbb{C}^{2^{k}n}(=\{(z_{0},w):w\in\mathbb{C}^{(2^{k}-1)n}\})$, $\varphi\sim{\varphi_{k}}|_{\{z=z_{0}\}}$, $n\sim (2^{k}-1)n$),
it follows that
\begin{equation}
\label{equ:0507b}\nu({\varphi_{k}}|_{\{z=z_{0}\}},(z_{0},o)')=\frac{(2^{k}-1)n}{c_{(z_{0},o)'}({\varphi_{k}}|_{\{z=z_{0}\}})}.
\end{equation}

From Proposition \ref{prop:DK2000}, equality \ref{equ:0507b} and equality \ref{equ:0507a},
it follows that
\begin{equation}
\label{equ:0507c}c_{(z_{0},o)}({\varphi_{k}})\geq c_{(z_{0},o)'}({\varphi_{k}}|_{\{z=z_{0}\}})=\frac{(2^{k}-1)n}{\nu({\varphi_{k}}|_{\{z=z_{0}\}},(z_{0},o)')}
=\frac{(2^{k}-1)n}{\nu(\varphi,z_{0})}.
\end{equation}

Thus we finish proving the present theorem.

\bibliographystyle{references}
\bibliography{xbib}

\end{document}